\newtheorem{theorem}{Theorem}
\newtheorem{lemma}[theorem]{Lemma}
\newtheorem{proposition}[theorem]{Proposition}
\newtheorem{claim}{Claim}
\newtheorem{question}[theorem]{Question}
\newtheorem{conjecture}[theorem]{Conjecture}
\begin{document}
\newcommand{\Addresses}{{
\bigskip
\footnotesize

\medskip

Maria-Romina~Ivan, \textsc{Department of Pure Mathematics and
Mathematical Statistics, Centre for Mathematical Sciences, Wilberforce
Road, Cambridge, CB3 0WB, UK.}\par\nopagebreak\textit{Email address:}
\texttt{mri25@dpmms.cam.ac.uk}}}
\pagestyle{fancy}
\fancyhf{}
\fancyhead [LE, RO] {\thepage}
\fancyhead [CE] {MARIA-ROMINA IVAN}
\fancyhead [CO] {MINIMAL DIAMOND-SATURATED FAMILIES}
\renewcommand{\headrulewidth}{0pt}
\renewcommand{\l}{\rule{6em}{1pt}\ }
\title{\Large\textbf{MINIMAL DIAMOND-SATURATED FAMILIES}}
\author{MARIA-ROMINA IVAN}
\date{}
\maketitle
\begin{abstract}
For a given fixed poset $\mathcal P$ we say that a family of subsets of $[n]$ is $\mathcal P$-saturated if it does not contain an induced copy of $\mathcal P$, but whenever we add to it a new set, an induced copy of $\mathcal P$ is formed. The size of the smallest such family is denoted by $\text{sat}^*(n, \mathcal P)$.\par For the diamond poset $\mathcal D_2$ (the two-dimensional Boolean lattice), Martin, Smith and Walker proved that $\sqrt n\leq\text{sat}^*(n, \mathcal D_2)\leq n+1$. In this paper we prove that $\text{sat}^*(n, \mathcal D_2)\geq (4-o(1))\sqrt n$. We also explore the properties that a diamond-saturated family of size $c \sqrt n$, for a constant $c$, would have to have.
\end{abstract}
\section{Introduction}
We say that a poset $(\mathcal Q, \preccurlyeq')$ contains an
\textit{induced copy} of a poset $(\mathcal P, \preccurlyeq)$ if there exists an injective function $f:\mathcal P\rightarrow\mathcal Q$ such that $(\mathcal P, \preccurlyeq)$ and $(f(\mathcal P), \preccurlyeq')$ are isomorphic. In this paper we will only consider posets consisting of subsets of $[n]$ with the partial order given by inclusion. For a fixed poset $\mathcal P$ we call a family $\mathcal F$ of subsets of $[n]$ $\mathcal P$-\textit{saturated} if $\mathcal F$ does not contain an induced copy of $\mathcal P$, but for every subset $S$ of $[n]$ such that $S\notin\mathcal F$, the family $\mathcal F\cup \{S\}$ does contain such a copy. We denote by $\text{sat}^*(n, \mathcal P)$ the size of the smallest $\mathcal P$-saturated family of subsets of $[n]$. In general we refer to $\text{sat}^*(n, \mathcal P)$ as the \textit{induced saturation number} of $\mathcal P$.\par Saturation for posets was introduced by Gerbner, Keszegh, Lemons, Palmer, P{\'a}lv{\"o}lgyi and Patk{\'o}s \cite{gerbner2013saturating}, although this was not for \textit{induced} saturation. We refer the reader to the textbook of Gerbner and Patk{\'o}s \cite{gerbner2018extremal} for a nice introduction to the area, as well as to the work of Morrison, Noel and Scott \cite{Morrison2014OnSK} for more results. We also remark that considerable efforts have been made on related extremal problems in poset theory, such as the recent work of Keszegh, Lemons, Martin, P\'{a}lv\"{o}lgyi and Patk\'{o}s \cite{KESZEGH2021}.\par The diamond poset, which we denote by $\mathcal D_2$, is the 4 point poset with one minimal element, one maximal element and two incomparable elements as shown in the picture below. Equivalently, $\mathcal D_2$ is the two-dimensional Boolean lattice.
\begin{center}
\resizebox{3 cm}{2.5 cm}{
\begin{tikzpicture}
\node (top) at (10,2) {$\bullet$}; 
\node (left) at (8,0) {$\bullet$};
\node (right) at (12,0) {$\bullet$};
\node (bottom) at (10,-2) {$\bullet$};
\draw (top) -- (left) -- (bottom) -- (right) -- (top);
\end{tikzpicture}
}\\
The Hasse diagram of the poset $\mathcal D_2$
\end{center}
Despite the simplicity of the diamond poset, the question of its induced saturation number is still open. The most recent bounds are $\sqrt n\leq\text{sat}^*(n, \mathcal D_2)\leq n+1$ proved by Martin, Smith and Walker \cite{martin2019improved}. Ferrara, Kay, Kramer, Martin, Reiniger, Smith and Sullivan \cite{Ferrara2017TheSN} conjectured that $\text{sat}^*(n, \mathcal D_2)=\Theta(n)$.\par In this paper we prove that $\text{sat}^*(n, \mathcal D_2)\geq (4-o(1))\sqrt n$, thus improving the constant factor. We remark that the bound of $\sqrt n$, proved in \cite{martin2019improved}, is the result of an argument about
the `local structure' of a diamond-saturated family, and in fact this type of argument cannot get beyond $\sqrt n$. To get beyond the $\sqrt n$ barrier and
achieve $4\sqrt{n}$, we develop a more `global' kind of argument which makes full use of the properties of minimal/maximal sets in a diamond-saturated family.\par Most importantly, our proof explores in depth what it means for a diamond-saturated family to be of size $c\sqrt n$ for a constant $c$. Surprisingly, such a structure is very rich in properties and yet, as far as we can see, there is no indication that such a family cannot exist. This suggests that perhaps the induced saturation number for the diamond in not of linear growth.
\section{The main result}
We first remark that, originally, the proof gave a $2\sqrt{2n}$ bound. To reach the current bound we needed to answer negatively Question 5. This has been added at the very end of the paper.

The following lemma, which is a special case of Lemma 9 in paper \cite{martin2019improved}, shows the importance of minimal elements in a $\mathcal D_2$-saturated family $\mathcal F$. Ferrara, Kay, Kramer, Martin, Reiniger, Smith and Sullivan \cite{Ferrara2017TheSN} proved that if $\mathcal F$ contains the empty set (or the full set $[n]$), then $|\mathcal F|> n$. This fact will be used repeatedly throughout the proof.
\begin{lemma}
Let $\mathcal F$ be a $\mathcal{D}_2$-saturated family. Let $S$ be a minimal element of $\mathcal F$. Then $|\mathcal F|\geq |S|$.
\end{lemma}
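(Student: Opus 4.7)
The natural strategy is to build, for each element $s\in S$, a distinct ``witness'' set $W_s\in\mathcal{F}$, and to verify each $W_s$ is also different from $S$, giving the stronger bound $|\mathcal{F}|\geq |S|+1$. (The edge case $S=\emptyset$ is immediate.)

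Starting from $s\in S$, I would set $T_s=S\setminus\{s\}$. Because $S$ is minimal in $\mathcal{F}$, the set $T_s\notin\mathcal{F}$, so by saturation the family $\mathcal{F}\cup\{T_s\}$ contains an induced $\mathcal{D}_2$ which necessarily uses $T_s$. If $T_s$ were to play the role of a middle or the top of this diamond, there would exist some $A\in\mathcal{F}$ with $A\subsetneq T_s\subsetneq S$, contradicting the minimality of $S$. Hence $T_s$ must be the bottom, and the three upper elements $C,D,B\in\mathcal{F}$ are proper supersets of $T_s=S\setminus\{s\}$.

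The key step is to choose $W_s$ to be a middle of this diamond that misses $s$, equivalently satisfying $W_s\cap S=S\setminus\{s\}$. I would split into two cases. If $S\in\{C,D\}$, say $S=C$, then the other middle $D$ is incomparable to $S$ while containing $T_s$; the only element of $S$ that can fail to lie in $D$ is $s$, so $W_s:=D$ has $W_s\cap S=T_s$ and must also contain a point outside $S$ (to violate $D\subseteq S$). If instead $S\notin\{C,D\}$, I would argue by contradiction: if both $C$ and $D$ contained $s$, then since each already contains $T_s$, each would contain $S$, and being distinct from $S$ they would strictly contain $S$; consequently $\{S,C,D,B\}$ would be an induced $\mathcal{D}_2$ already present in $\mathcal{F}$, contradicting the hypothesis that $\mathcal{F}$ is $\mathcal{D}_2$-free. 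Hence at least one middle avoids $s$, and serves as $W_s$.

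With witnesses in hand, the $W_s$ are pairwise distinct because $W_s\cap S=S\setminus\{s\}$ already identifies $s$, and each $W_s\neq S$ since $s\in S\setminus W_s$. Thus $\{W_s:s\in S\}\cup\{S\}$ is a set of $|S|+1$ distinct elements of $\mathcal{F}$, giving $|\mathcal{F}|\geq|S|+1$. The main obstacle I expect is exactly the second case of the dichotomy: recognising that when both middles of the created diamond contain $s$ while $S$ sits outside that diamond, inserting $S$ underneath the middles produces an induced copy of $\mathcal{D}_2$ already inside $\mathcal{F}$. This self-referential use of the diamond structure against $\mathcal{F}$'s own $\mathcal{D}_2$-freeness is the crucial observation; the rest is bookkeeping.
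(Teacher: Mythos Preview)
Your argument is correct and follows essentially the same route as the paper's proof: remove an element $s$ from $S$, use minimality to force $T_s$ to sit at the bottom of the new diamond, and then either $S$ is one of the middles (so the other middle omits $s$) or both middles containing $s$ would yield an induced $\mathcal D_2$ inside $\mathcal F$ with bottom $S$. The only notable difference is that you also record $W_s\neq S$ and thereby obtain the slightly sharper bound $|\mathcal F|\geq |S|+1$, which the paper does not state explicitly.
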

\begin{proof}
If $S$ is the empty set, then the statement is trivially true.\\Now we assume $S\neq\emptyset$, and for each element $i$ of $S$ we will find an element of $\mathcal F$ that contains all elements of $S$ except $i$. This will give us $|S|$ elements of $\mathcal F$, as desired.\par More precisely, by the minimality of $S$ we have that $S-\{i\}\notin\mathcal F$. Therefore, since $\mathcal F$ is diamond-saturated, $S-\{i\}$ will have to form a diamond when added to the family. We obtain three sets $A$, $B$ and $C$ of $\mathcal F$ such that they form a diamond together with $S-\{i\}$. By the minimality again we can only have $S-\{i\}$ the minimal element of the diamond. Let $A$ be the maximal element of the diamond.\par Suppose $B\neq S$ and $C\neq S$. If $i\in B$ and $i\in C$, then we observe that $A$, $B$, $C$ and $S$ form a diamond in $\mathcal F$, contradiction. Thus we can assume, without loss of generality, that $i\notin B$. So we have $S-\{i\}\subset B$ and $i\notin B$, as claimed.\par If on the other hand $C=S$, then $B$ and $S$ are incomparable and $S-\{i\}\subset B$. So we again obtain that $i\notin B$ and $S-\{i\}\subset B$, which finishes the proof. 
\end{proof}
\begin{theorem} For every $c<2\sqrt2$ there exists an $n_0$ such that $\text{sat}^*(n, \mathcal D_2)\geq c\sqrt n$ for any $n\geq n_0$.\end{theorem}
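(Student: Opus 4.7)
The plan is to argue by contradiction. Assume $\mathcal F$ is $\mathcal D_2$-saturated with $|\mathcal F| = m < c\sqrt n$ for some fixed $c < 4$ and $n$ large. First I would collect preliminary structural facts: since $m \ll n$, neither $\emptyset$ nor $[n]$ lies in $\mathcal F$ (each would force $|\mathcal F| > n$), so every singleton $\{i\}$ and every co-singleton $[n]\setminus\{i\}$ that is not itself in $\mathcal F$ must be comparable with some element of $\mathcal F$. The Lemma gives that every minimal element $M$ of $\mathcal F$ satisfies $|M| \leq m$; looking more carefully at the Lemma's proof, the top $A_i$ of each constructed diamond also lies in $\mathcal F$ and is distinct from $M$ and from the witness sets, which upgrades the bound to $|M| \leq m-2$. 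Dually, every maximal element has size at least $n-m+2$.

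Next I would carry out the singleton and co-singleton saturation analyses. Let $s$ denote the number of singletons in $\mathcal F$; clearly $s \leq m$. For each of the at least $n-s$ indices $i$ with $\{i\} \notin \mathcal F$, saturation together with the exclusion of $\emptyset$ from $\mathcal F$ forces $\{i\}$ to be the minimum of the new diamond, yielding incomparable $B_i, C_i \in \mathcal F$ with $i \in B_i \cap C_i$ and a common upper bound $A_i \in \mathcal F$. A symmetric statement holds for co-singletons. The key structural observation is then: if $B, C \in \mathcal F$ are incomparable and share an upper bound $A \in \mathcal F$, then no $D \in \mathcal F$ can satisfy $D \subseteq B \cap C$, since otherwise $D, B, C, A$ would be an induced diamond inside $\mathcal F$. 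In particular $\{i\} \notin \mathcal F$ and no minimal element of $\mathcal F$ lies inside $B \cap C$ for every $i \in B \cap C$; dually, unions are similarly restricted.

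The endgame is a quantitative double count. Summing over incomparable pairs in $\mathcal F$ with a common upper bound gives $\sum_{\{B,C\}} |B \cap C| \geq n-s \geq n-m$, and the dual argument for co-singletons gives $\sum_{\{B,C\}} (n - |B \cup C|) \geq n-m$. A crude bound of the form $\binom{m}{2}\cdot\max|B\cap C|$ yields only $m \gtrsim \sqrt{2n}$. To reach $4\sqrt n$ I would combine both bounds, exploit a canonical choice of $(B_i, C_i, A_i)$ with $B_i, C_i$ taken minimal in $\mathcal F$ above $\{i\}$ (so that $|B_i|, |C_i| \leq m$ via the sharpened Lemma), and use the structural restriction from the previous paragraph to limit how many distinct $i$'s a single pair can serve. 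A careful weighted sum should yield an inequality equivalent to $m^2 \geq (16-o(1))\,n$, i.e.\ $m \geq (4-o(1))\sqrt n$.

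The main obstacle is exactly this last step: extracting the specific constant $4$ rather than a smaller universal constant. The naive $\binom{m}{2}$ argument is insufficient, and the improvement must come from simultaneously exploiting the singleton structure, the co-singleton structure, and the rather tight size restrictions on the minimal and maximal elements of $\mathcal F$. I expect the bulk of the technical work, and perhaps a clever averaging or incidence count that tracks which pairs $\{B,C\}$ can serve which indices $i$, to reside there.
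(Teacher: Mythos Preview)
Your approach is genuinely different from the paper's, and the gap you yourself flag at the end is real, not merely a matter of missing technical details.

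The paper does not work with singletons $\{i\}$ at all. Instead it fixes one minimal element $S\in\mathcal F$ and, for almost every $i\notin S$, studies the diamond created by $S\cup\{i\}$. After discarding a few exceptional cases, for at least $n-4c\sqrt n$ indices $i$ one gets a diamond with $S$ at the bottom, $S\cup\{i\}$ and some $B_i\supsetneq S$ (with $i\notin B_i$) in the middle, and some $A_i$ on top, where $B_i$ is chosen maximal and then $A_i$ minimal. The paper then \emph{iterates}: since $B_i\cup\{i\}\notin\mathcal F$ for most $i$, it too creates a diamond, and one argues that $B_i\cup\{i\}$ must again sit in the middle, producing a new bottom element $N_i\in\mathcal F$ which is minimal in $\mathcal F$ and incomparable to $S$. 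A short argument shows that the pair $(B_i,N_i)$ determines $i$, so one gets a bipartite graph on vertex classes $\{B_i\}$ and $\{N_i\}$, both subsets of $\mathcal F$, with roughly $n$ edges and the classes disjoint. If $|\{N_i\}|=k$, then some $N$ has degree at least $(n-5c\sqrt n)/k$, so $|\{B_i\}|\ge (n-5c\sqrt n)/k$, and hence $|\mathcal F|\ge k+(n-5c\sqrt n)/k\ge 2\sqrt{n-5c\sqrt n}$. The factor $4$ then comes from running the same argument on the complementary family $\mathcal G=\{\bar A:A\in\mathcal F\}$ with a suitably chosen minimal element, and proving (via a separate claim about the existence of an incomparable minimal/maximal pair) that the two bipartite graphs use disjoint vertex sets inside $\mathcal F$.

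Your double-counting scheme does not contain the ingredients that produce the constant $4$ here: the AM--GM step on the bipartite graph, and the complement duality yielding a second, vertex-disjoint copy. In particular, two of your intermediate claims are problematic. First, choosing $B_i,C_i$ minimal in $\mathcal F$ \emph{above} $\{i\}$ does not make them minimal elements of $\mathcal F$, so the Lemma does not give $|B_i|\le m$; there may well be smaller members of $\mathcal F$ below $B_i$ that simply omit $i$. Second, even granting small $|B_i\cap C_i|$, the bound $n-s\le \binom{m}{2}\cdot\max|B\cap C|$ is the wrong direction for what you want: one pair $(B,C)$ can serve every $i\in B\cap C$, so without an \emph{injectivity} statement like the paper's Claim~C there is no inequality of the form $n\lesssim m^2/16$. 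Your sketch does not supply such a statement, and the ``structural restriction'' you note (no $D\in\mathcal F$ lies inside $B\cap C$) constrains the family but does not by itself limit the multiplicity of the map $i\mapsto(B_i,C_i)$. As it stands the proposal reaches at best the already-known $\sqrt n$ range and leaves the improvement to $4\sqrt n$ unaccounted for.
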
\begin{proof}
\par Suppose for a contradiction that for some $c<2\sqrt2$ we have $\text{sat}^*(n, \mathcal D_2)\leq c\sqrt n$ for some arbitrarily large $n$.\par Fix $\mathcal F$ an arbitrary diamond-saturated family with cardinality at most $c\sqrt n$. This immediately implies that $\emptyset, [n] \notin\mathcal F$. Fix $S\in\mathcal F$ a minimal set with respect to inclusion. From Lemma 1 we know that $|S|\leq c\sqrt n$. Thus there exist $n-c\sqrt n$ singletons such that $i\notin S$. For those singletons, at least $n-2c\sqrt n$ of the sets $S\cup\{i\}$ are not in $\mathcal F$, by our initial assumption on the size of the family.\\A set $S\cup\{i\}$ that is not in our family must form a diamond with 3 elements of $\mathcal F$ by the saturation of $\mathcal F$. Assume $S\cup\{i\}, A, B, C$ form a diamond where $A, B, C\in \mathcal F$.\par We observe that $S\cup\{i\}$ cannot be the maximal element of such a diamond for more than $c\sqrt n$ singletons. Indeed, for each singleton $i$ for which $S\cup\{i\}$ is the maximal element of a diamond, let $V_i$ be minimal among the minimal elements of such diamonds. We observe that each $V_i$ is a minimal element of $\mathcal F$ and that $i\in V_i$ by the minimality of $S$ ($V_i\neq S$ as they have different sizes). Moreover, $V_i=S-K_i\cup\{i\}$ for some $K_i\subseteq S$. This implies that $S\cup\{i\}$ is the maximal element of the diamond for at most $c\sqrt n$ singletons.\par Also, $S\cup\{i\}$ cannot be the minimal element of the diamond because then $A, B, C, S$ would form a diamond in $\mathcal F$, contradicting the fact that $\mathcal F$ is diamond free. Thus, $S\cup\{i\}$ has to be one of the two incomparable elements of the induced diamond for at least $n-3c\sqrt n$ singletons. Therefore, for these singletons $i$ we have the structure below, where $A_i, B_i, S_i\in\mathcal F$.
\begin{center}
\resizebox{5 cm}{4.5 cm}{
\begin{tikzpicture}
\node[label=above:$A_i$] (top) at (10,2) {$\bullet$}; 
\node[label=left:$S\cup\{i\}$] (left) at (8,0) {$\bullet$};
\node[label=right:$B_i$] (right) at (12,0) {$\bullet$};
\node[label=below:$S_i$] (bottom) at (10,-2) {$\bullet$};
\draw (top) -- (left) -- (bottom) -- (right) -- (top);
\end{tikzpicture}
}
\end{center} 
\par Moreover, we observe that by minimality either $S=S_i$ or $S_i-S=\{i\}$. If $S_i\neq S$ then there exists $K_i\subseteq S$ such that $S_i=S\cup\{i\}-K_i$, and all such $S_i$ are pairwise different because $S_i$ is the only one containing $i$. Therefore there are at least $n-4c\sqrt n$ singletons $i$ such that $S_i=S$. We will now focus on these singletons for which we have the following diamond, where $B_i$ is of maximal cardinality with respect to this construction, and $A_i$ is of minimal cardinality, after choosing $B_i$.
\begin{center}
\begin{tikzpicture}
\node[label=above:$A_i$] (top) at (10,2) {$\bullet$}; 
\node[label=left:$S\cup\{i\}$] (left) at (8,0) {$\bullet$};
\node[label=right:$B_i$] (right) at (12,0) {$\bullet$};
\node[label=below:$S$] (bottom) at (10,-2) {$\bullet$};
\draw (top) -- (left) -- (bottom) -- (right) -- (top);
\end{tikzpicture}
\end{center}
\par We observe that since $B_i$ and $S\cup\{i\}$ are incomparable, but $S\subset B_i$, then we must have $i\notin  B_i$.
\begin{claim}If $i\neq j$, then $B_i\cup\{i\}\neq B_j\cup\{j\}$.
\end{claim}
\begin{proof}
Suppose $B_i\cup\{i\}=B_j\cup\{j\}$. Since $i\notin B_i$ and $j\notin B_j$, we must have $i\in B_j$ and $j\in B_i$, which implies that $B_i$ and $B_j$ are incomparable. We now observe that $S, B_i, B_j, A_i$ form a diamond in $\mathcal F$, which is a contradiction. We can choose $A_i$ to be the maximal element because $B_j\subset B_j\cup\{j\}=B_i\cup\{i\}\subseteq A_i$.
\end{proof}
We deduce from Claim A and the assumption on the size of $\mathcal F$ that for at least $n-5c\sqrt n$ singletons $i$, $B_i\cup\{i\}$ is not in the family. Therefore, by saturation, each element $B_i\cup\{i\}$ has to form a diamond with 3 different elements of $\mathcal F$. Let $X_i, Y_i, N_i$ be three such elements. We notice that $B_i\cup\{i\}$ cannot be the minimal element because then $B_i, X_i, Y_i, N_i$ would form a diamond in $\mathcal F$. It also cannot be the maximal element of the diamond because $B_i\cup\{i\}\subset A_i$, thus $A_i, X_i, Y_i, N_i$ will again form a diamond in $\mathcal F$. We conclude that $B_i\cup\{i\}$ has to be one of the two incomparable elements as shown in the picture below. We choose $N_i$ of minimal cardinality with respect to this configuration.
\begin{center}
\begin{tikzpicture}
\node[label=above:$X_i$] (top) at (10,2) {$\bullet$}; 
\node[label=left:$B_i\cup\{i\}$] (left) at (8,0) {$\bullet$};
\node[label=right:$Y_i$] (right) at (12,0) {$\bullet$};
\node[label=below:$N_i$] (bottom) at (10,-2) {$\bullet$};
\node[label=below:$B_i$] (extra) at (8,-2) {$\bullet$};
\draw (top) -- (left) -- (bottom) -- (right) -- (top);
\draw (extra)--(left);
\end{tikzpicture}
\end{center}
\begin{claim} $B_i$ and $N_i$ are incomparable.
\end{claim}
\begin{proof}
Suppose they are comparable. There are three cases:
\begin{enumerate}
\item $N_i\subset B_i$.\\ In this case, if $B_i$ and $Y_i$ were incomparable, then we would have the diamond $X_i, B_i, N_i, Y_i$ inside $\mathcal F$. Therefore $B_i$ and $Y_i$ are comparable and the only option is $B_i\subset Y_i$ as $B_i\cup\{i\}\parallel Y_i$. This also implies that $i\notin Y_i$. Finally we have $S\subset B_i\subset Y_i$, $S\cup\{i\}\subset B_i\cup\{i\}\subset X_i$, and $S\cup\{i\}\parallel Y_i$ since $i\notin Y_i$ and clearly $|S\cup\{i\}|\leq |B_i|<|Y_i|$. This means that we have the diamond below, which contradicts the maximality of $B_i$.
\begin{center}
\resizebox{5 cm}{4.5 cm}{
\begin{tikzpicture}
\node[label=above:$X_i$] (top) at (10,2) {$\bullet$}; 
\node[label=left:$S\cup\{i\}$] (left) at (8,0) {$\bullet$};
\node[label=right:$Y_i$] (right) at (12,0) {$\bullet$};
\node[label=below:$S$] (bottom) at (10,-2) {$\bullet$};
\draw (top) -- (left) -- (bottom) -- (right) -- (top);
\end{tikzpicture}
}
\end{center}
\item $N_i=B_i$.\\This case reduces to the previous case since now we already know $B_i\subset Y_i$.
\item $B_i\subset N_i$.\\This case is impossible by cardinality since $N_i\subset B_i\cup\{i\}$, thus $|B_i|<|N_i|<|B_i\cup\{i\}|=|B_i|+1$, a contradiction.
\end{enumerate}
\end{proof}
An immediate consequence of Claim B is that $S$ and $N_i$ are incomparable. If they were not, then $S\subset N_i$ would form the diamond $X_i, B_i, N_i, S$ in $\mathcal F$, contradiction. Lastly, $S$ cannot be equal to $N_i$ since $S\subset B_i$, but $B_i$ and $N_i$ are incomparable.\\We also remark that by the minimality of $N_i$ we have that any two distinct $N_i$ are incomparable, and that $i\in N_i$. The second remark follows from the fact that $N_i\parallel B_i$, but $N_i\subset B_i\cup\{i\}$.\par The following claim will be very useful for the construction and consequent modification of a certain bipartite graph at the end of the section.
\begin{claim} If $i\neq j$ then we cannot have both $N_i=N_j$ and $B_i=B_j$.
\end{claim}
\begin{proof} Suppose $N_i=N_j$. Then by previous remark we have that $i\in N_i$ and consequently $i\in N_j$. Also $N_j\subset B_j\cup\{j\}$, thus $i\in B_j$. On the other hand $i\notin B_i$, hence $B_i\neq B_j$ which finishes the claim.
\end{proof}
The next claim is not explicitly used in the proof, however it could be of potential interest towards proving more than the result in this paper, and it further illustrates how constraining and structurally rich is the property of being a minimal diamond-saturated family.
\begin{claim} If $B_i\neq B_j$, then $A_i\neq A_j$.
\end{claim}
\begin{proof} Assume for a contradiction that $B_i\neq B_j$ and $A_i=A_j$. We cannot have $B_i\parallel B_j$ because otherwise $A_i=A_j, B_i, B_j, S$ would form a diamond in $\mathcal F$. Therefore, without loss of generality, we can assume $B_i\subset B_j$ and we have the following diagram. 
\begin{center}
\resizebox{5 cm}{4.5 cm}{
\begin{tikzpicture}
\node[label=above:${A_i=A_j}$] (top) at (10,2) {$\bullet$}; 
\node[label=left:$S\cup\{i\}$] (left) at (8,0) {$\bullet$};
\node[label=right:$B_i$] (right) at (12,0) {$\bullet$};
\node[label=below:$S$] (bottom) at (10,-2) {$\bullet$};
\node[label=left:$B_j$] (extra) at (11,1) {$\bullet$};
\draw (top) -- (left) -- (bottom) -- (right) -- (top);
\draw[dotted] (extra)--(left);
\end{tikzpicture}
}
\end{center}
If $S\cup\{i\}$ is not comparable to $B_j$, then the diamond formed by these two, $A_i=A_j$ and $S$ would contradict the maximality of $B_i$. Hence we need to have $S\cup\{i\}$ and $B_j$ comparable, and by cardinality (there is no set strictly between $S$ and $S\cup\{i\}$), the only possibility is $S\cup\{i\}\subset B_j$.\\But now we have the diamond formed by $S, B_i, S\cup\{i\}, B_j$ which contradicts the minimality of $A_i$ with respect to $B_i$.
\end{proof}
\par We have already seen previously that $B_i\cup\{i\}\neq B_j\cup\{j\}$ for $i\neq j$. Thus we have at least $n-5c\sqrt n$ sets, $B_i\cup\{i\}\notin\mathcal F$. Each of them has a corresponding set $N_i$, although the $N_i$'s can sometimes coincide for different $i$'s. We build the following bipartite graph: the vertex set is $\mathcal B\sqcup\mathcal N$, where $\mathcal B$ consists of the sets $B_i\cup\{i\}$ and $\mathcal N$ consists of the corresponding sets $N_i$. The only edges are the ones joining $B_i\cup\{i\}$ to the corresponding $N_i$ for each $i$. We observe that each vertex in $\mathcal B$ has degree 1, thus we have at least $n-5c\sqrt n$ edges.\par We now modify the graph by replacing $B_i\cup\{i\}$ with $B_i$ for all $i$, and identifying the same repeating set with a single vertex -- in other words, if $B_i=B_j$ for two $i\neq j$, the vertex $B_i\cup\{i\}$ and the vertex $B_j\cup\{j\}$ will both be identified with the vertex $B_i=B_j$. This new graph, which we call $G$, is bipartite and has vertex set $\mathcal B'\sqcup\mathcal N$, where $\mathcal B'$ consists of the sets $B_i$. Notice that no vertex in $\mathcal B'$ appears in $\mathcal N$ since all the $B_i$ contain $S$, while all the $N_i$ are incomparable to $S$.\par If an edge were to contract, that would mean that for two different $i$ and $j$ we have $N_i=N_j$ and $B_i=B_j$, which contradicts Claim C. Hence, the modified graph still has at least $n-5c\sqrt n$ edges.\par Assume that $|\mathcal N|=k$ and that $d$ is the biggest degree in $\mathcal N$. Since $G$ is bipartite we have that the number of edges is the sum of degrees in $\mathcal N$ which is less or equal to $kd$. Thus we have that $n-5c\sqrt n\leq kd\Rightarrow d\geq\dfrac{n-5c\sqrt n}{k}$. This also tells us that the size of $\mathcal B'$ is at least $d\geq\dfrac{n-5c\sqrt n}{k}$. Moreover, we already have that $|\mathcal F|\geq|\mathcal B'|+|\mathcal N|\geq k+\dfrac{n-5c\sqrt n}{k}\geq2\sqrt{n-5c\sqrt n}$.
\par Similarly, by looking at $\mathcal G=\{A: \bar A \in\mathcal F\}$, where $\bar A$ is the complement of $A$ in $[n]$, we observe that this is also a diamond-saturated family of the same size as $\mathcal F$, where the minimal elements are the complements of the the maximal elements of $\mathcal F$. We can do the same analysis as above by fixing $T$ a minimal element of $\mathcal G$, and construct a bipartite graph $H$ analogously to the above graph $G$.\par The bipartite graph $H$ has vertex set $\mathcal C'\sqcup\mathcal M$, where $\mathcal M$ consists of the minimal elements, denoted by $M_i$ (equivalent to the $N_j$), and $\mathcal C'$ consists of the elements that contain $T$, denoted by $C_k$ (equivalent to the $B_l$). Therefore $\bar{M_i}\in \mathcal F$ are maximal elements in $\mathcal F$. On the other hand, any $N_j$ is not a maximal element as it is contained in $Y_j$ by construction. Similarly, any $B_l$ is not a maximal element as it is contained in $A_l$. We conclude that no $\bar{M_i}$ can be equal to any $N_j$ or to any $B_l$. Moreover, $B_l$ is neither a maximal nor a minimal element in $\mathcal F$ since it is between $S$ and $A_i$, thus no $\bar{C_k}$ is a minimal or a maximal element either, which implies that no $\bar{C_k}$ is equal to any $N_j$.\par Let $|\mathcal M|=t$. By the same argument as above, now applied to the graph $H$, we have that $|\mathcal C'|\geq\dfrac{n-5c\sqrt n}{t}$. Let $\mathcal M^c=\{\bar{A}:A\in\mathcal M$\} and $\mathcal C'^c=\{\bar{A}:A\in\mathcal C'\}$, thus $\mathcal M^c$ and $\mathcal C'^c$ are subsets of $\mathcal F$. As observed above $\mathcal N\cap\mathcal M^c=\emptyset$, $\mathcal N\cap\mathcal C'^c=\emptyset$ and $\mathcal M^c\cap\mathcal B'=\emptyset$. Therefore we have that $|\mathcal F|\geq |\mathcal N|+|\mathcal M|+|\mathcal B'\cup\mathcal C'^c|$. Assume without loss of generality that $t\geq k$. We then have that $|\mathcal F|\geq k+t+|\mathcal B'|\geq 2k+\dfrac{n-5c\sqrt n}{k}\geq2\sqrt{2(n-5c\sqrt n)}$, which is greater than $c\sqrt n$ for $n$ large enough, a contradiction.
\end{proof}
The above proof leads to a natural question, namely how disjoint can $\mathcal B'$ and $\mathcal C'^c$ be? Suppose the family $\mathcal F$ contains a minimal element $P$ and a maximal element $R$  such that $P$ and $R$ are not comparable. It turns out that in that case $\mathcal B'$ and $\mathcal C'^c$ can be disjoint, thus giving $|\mathcal F|\geq|\mathcal N|+|\mathcal B'|+|\mathcal M^c|+|\mathcal C'^c|\geq k+\dfrac{n-5c\sqrt n}{k}+t+\dfrac{n-5c\sqrt n}{t}\geq4\sqrt{n-5c\sqrt n}$. Indeed, we run the above argument once for $\mathcal F$ with fixed minimal element $P$ and once for $\mathcal G$ with fixed minimal element $\bar{R}$. Now we notice that if $B_l=\bar{C_k}$ for some $l$ and $k$, then $S\subset B_l=\bar{C_k}\subset \bar{\bar R}=R$, a contradiction.\par However, whether such minimal and maximal sets exist in any diamond-saturated family (without $\emptyset$ and $[n]$) seems to be a non-trivial question. Using the above notation, the following proposition guarantees the existence of $P$ and $R$ under some mild assumptions.
\begin{proposition}
Suppose there is no $i\notin S$ such that $S\cup\{i\}\notin\mathcal F$, $S=S_i$ and $B_i\cup\{i\}\in\mathcal F$. Then there exists a minimal element $P$ and a maximal element $R$ in $\mathcal F$ such that $P$ and $R$ are not comparable.
\end{proposition}
\begin{proof}
We begin by noticing that all the $N_i$ and $S$ are minimal elements in $\mathcal F$ and $i\in N_i$ for every $i$ such that $i\notin S$ and $S-K_i\cup\{i\}\notin \mathcal F$ for any $K_i\subseteq S$. For the singletons $i\notin S$ such that $S-K_i\cup\{i\}\in\mathcal F$ for some $K_i\subseteq S$, we consider $L_i\in \mathcal F$ to be the minimal element such that $L_i\subseteq S-K_i\cup\{i\}$. Because $S$ is itself a minimal element, we need to have $i\in L_i$.\\If every maximal element is comparable to every minimal element, then any maximal element must contain the union of all $N_i$, $L_i$ and $S$ which is $[n]-W$, where $W=\{i:i\notin S, S\cup\{i\}\in \mathcal F\}$.\\Suppose that $|W|\geq 2$. If $S\cup\{i\}$ and $S\cup\{j\}$ are in $\mathcal F$ for $i\neq j$ and $i, j\notin S$, then we cannot have a maximal element $T\in \mathcal F$ that contains the pair $\{i,j\}$ since by assumption $S\subset T$ and $S, S\cup\{i\}, S\cup\{j\}$ and $T$ will form a diamond in $\mathcal F$. This implies that no element of $\mathcal F$ contains both $i$ and $j$ (as every element is included in a maximal element), in particular $\{i, j\}\notin\mathcal F$. Thus $\{i, j\}$ will have to form a diamond with 3 elements of $\mathcal F$ by saturation. However, since the empty set is not in the family, $\{i,j\}$ cannot be the maximal element of the diamond, therefore it will be a subset of one of the three sets it form a diamond with, contradiction.\\We conclude that $|W|\leq 1$, thus the union of all the minimal sets is either $[n]$ or $[n]-\{i\}$ for some $i\notin S$ and $S\cup\{i\}\in \mathcal F$. If the latter is true, then take $D$ to be the maximal element of $\mathcal F$ that contains $S\cup\{i\}$. $D$ will have to contain $[n]-\{i\}$ too by our assumption, thus $D=[n]$. We see that in both cases we have to have $[n]\in\mathcal F$, which is a contradiction. 
\end{proof}

\section{Could $\text{sat}^*(n,\mathcal D_2)$ be $O(\sqrt n)$?}
\par The above proof explores the extremal behaviour of a diamond-saturated family of size $O(\sqrt n)$. The square root bound appears to push the minimal and maximal elements closer together, yet spread through most of the layers of the hypercube -- note that this is quite unlike the two canonical examples of diamond-saturated families, namely a chain of size $n+1$, and the family of all singletons and the empty set. Indeed, consider the graphs constructed towards the end of the proof. They show that, under the condition that the diamond-saturated family $\mathcal F$ is of square root order, $\mathcal F$ must be in a way invariant under taking complements -- for example, the antichains formed by the minimal and maximal elements have to roughly look the same and be of $\sqrt n$ order.\par It is clear from the proof that if the induced saturation number for the diamond is $\Theta (\sqrt n)$, then the size of the biggest antichain of a family of this size has to be of $\sqrt n$ order. This can be seen by looking at the bipartite graph considered in the proof and the fact that one side, namely the $N_i$'s, is an antichain of size $k$. Indeed, we have that the number of edges is equal to the sum of the degrees of the $N_i$, thus $n-5c\sqrt n\leq k\times\text{max degree}$. Because for each $i$, $i\in N_i$, we have $\text{deg}(N_i)\leq|N_i|\leq|\mathcal F|\leq c\sqrt n$, where the middle inequality comes from the fact that the $N_i$ are minimal elements. Therefore the maximum degree is at most $c\sqrt n$, hence $k\geq\dfrac{\sqrt n}{c}-5$.\par By applying Dilworth's theorem, we get that $\mathcal F$ can be decomposed into roughly $\sqrt n$ chains. Since $|\mathcal F|=O(\sqrt n)$, this suggests a family of $c'\sqrt n$ disjoint chains, each of constant size and, more importantly, positioned in such a way that there are no common interior gaps for all of them. We believe that this is possible and that the above proof has some of the key clues to construct such a diamond-saturated family. More precisely, we conjecture the following.
\begin{conjecture} $\text{sat}^*(n, \mathcal D_2)=\Theta(\sqrt n)$. Moreover, there exists a constant $c$ such that for $n$ large enough there exists a diamond-saturated family $\mathcal F$ consisting of $\sqrt n$ chains each of size $c$, with the property that if $C_i\subseteq A\subseteq B_i$, where $C_i$ and $B_i$ are elements of the $i^\text{th}$ chain for every $i$, then $A\in\mathcal F$.
\end{conjecture}
\par Led on by the above analysis, we can also ask the following question.
\begin{question}
Let $\mathcal F$ be a diamond-saturated family that does not contain $\emptyset$ or $[n]$. Can all the minimal elements of $\mathcal F$ be subsets of all the maximal elements of $\mathcal F$?
\end{question}
\textbf{Acknowledgement.} I would like to thank my supervisor Professor Imre Leader and the anonymous referees for guidance and helpful comments.\\

\vspace{1em}
\textbf{Update. }It turns out that the answer to Question 5 is `no', and the argument  is rather simple. Indeed, suppose that there exists a diamond-saturated family $\mathcal F$ with ground set $[n]$ such that $\emptyset, [n]\notin\mathcal F$, and every minimal element is comparable to every maximal element. Let $A_1, A_2,\dots,A_t$ be the minimal elements, and $B_1, B_2,\dots,B_l$ the maximal elements. Let also $X=\cup_{i=1}^t A_t$. By our assumption we have that $X\subseteq B_i$ for all $i\in[l]$. We now note that $[n]\setminus X$ is incomparable to all $A_i$ and $B_j$, for all $i\in [t]$ and all $j\in[l]$. This means that, not only it is not a member of $\mathcal F$, but it is incomparable to every set in $\mathcal F$. Consequently, this implies that it does not form a diamond added when added to $\mathcal F$, a contradiction. As explained above, this implies that $\text{sat}^*(n,\mathcal D_2)\geq(4-o(1))\sqrt n$.
\bibliographystyle{amsplain}
\bibliography{document}
\Addresses
\end{document}